\documentclass[12pt]{article}

\usepackage{amsfonts}
\usepackage{amsmath}
\usepackage{amssymb}
\usepackage{color}
\usepackage{amsthm}
\usepackage{hyperref}
\usepackage{makecell}
\usepackage{graphicx}
\usepackage{multirow}

\theoremstyle{plain}
\newtheorem{theorem}{Theorem}

\newtheorem{lemma}[theorem]{Lemma}

\theoremstyle{definition}

\theoremstyle{remark}
\newtheorem{remark}[theorem]{Remark}

\title{The binomial distribution maximises entropy among sums of negatively related indicators with fixed mean and support} 

\author{Fraser Daly\footnote{Department of Actuarial Mathematics and Statistics, and the Maxwell Institute for Mathematical Sciences, Heriot--Watt University, Edinburgh EH14 4AS, UK. E-mail: F.Daly@hw.ac.uk}} 

\date{\today}

\begin{document}

\maketitle

\noindent{\bf Abstract} 
We show that the binomial distribution supported on $\{0,1,\ldots,n\}$ and with mean $np$ for some $0<p<1$ maximises entropy within the class of sums of negatively related indicator random variables with this mean and support. Our results generalise the main result of Yu (2008), who showed that this binomial distribution maximises entropy within the smaller class of ultra-log-concave random variables of order $n$. Our proof makes use of a Markov chain construction of Yu (2008), together with a stochastic ordering argument. 

\vspace{12pt}

\noindent{\bf Key words and phrases: } Negative dependence; negative association; ultra-log-concavity; maximum entropy; convex order

\vspace{12pt}

\noindent{\bf MSC 2020 subject classification: }94A17 (Primary); 60E15, 60J10 (Secondary)

\section{Introduction and main results}

Let $W$ be a random variable supported on $\{0,1,\ldots,n\}$ for some $n\geq1$ and with mean $\mathbb{E}W=\mu\in(0,n)$. We write $f_k=\mathbb{P}(W=k)$, $k=0,1,\ldots,n$, for the mass function of $W$. We will show that if $W$ satisfies a certain stochastic ordering condition, then the entropy of $W$, denoted by
\[
H(W)=-\sum_{k=0}^nf_k\log f_k\,,
\]
is bounded above by the entropy of a binomial distribution whose support and mean match those of $W$. That is, the binomial distribution maximises entropy within the class of random variables $W$ supported on $\{0,1,\ldots,n\}$ with a given mean and satisfying our stochastic ordering condition. As we will see in Remark \ref{rem:ULC} below, this generalises previous maximum entropy results for the binomial due to \cite{h01} and \cite{y08}, since this class includes sums of independent indicator random variables and ultra-log-concave random variables of order $n$.

We will show in particular that sums of negatively related indicator random satisfy our stochastic ordering condition. That is, we will focus on the case where $W=X_1+\cdots+X_n$ for some random variables $X_1,\ldots,X_n$ each supported on \{0,1\} and which satisfy the property of being negatively related:
\begin{equation}\label{eq:NegRel}
\mathbb{E}[g(X_1,\ldots,X_{i-1},X_{i+1},\ldots,X_n)|X_i=1]\leq\mathbb{E}[g(X_1,\ldots,X_{i-1},X_{i+1},\ldots,X_n)]\,,
\end{equation}
for each $i=1,\ldots,n$ and all increasing functions $g:\{0,1\}^{n-1}\to\mathbb{R}$. See Section 2.4 of \cite{e05} for a discussion of this property.

There are many well-known examples of negatively related indicator random variables, including the following:
\begin{itemize}
\item If $X_1,\ldots,X_n$ are independent, then \eqref{eq:NegRel} holds with equality. In particular, a binomial random variable can be written as a sum of negatively related indicator random variables.
\item The random variables $X_1,\ldots,X_n$ are said to be negatively associated if
\[
\mathbb{E}[f(X_i:i\in A)g(X_i:i\in B)]\leq\mathbb{E}[f(X_i:i\in A)]\mathbb{E}[g(X_i:i\in B)]
\]
for all bounded and increasing functions $f$ and $g$, and disjoint sets $A,B\subseteq\{1,\ldots,n\}$. This property was introduced and studied by Joag-Dev and Proschan \cite{jp83}, and is known to hold in a variety of settings. For example, it arises from the multinomial distribution and from the joint distribution of ranks in a random sample. The negative association property is preserved under taking increasing functions of disjoint subsets of random variables. It is known that negatively associated indicator random variables are negatively related: see Theorem 2.20 of \cite{e05}. 
\item Letting $(\sigma_1,\ldots,\sigma_n)$ be a uniformly chosen permutation of $\{1,\ldots,n\}$, define $X_i$ to be the indicator that $\sigma_i$ is at most $a_i$ for some given numbers $a_1,\ldots,a_n$. Then $X_1,\ldots,X_n$ are negatively related; see Section 4.1 of \cite{bhj92}.
\item If $m$ balls are each randomly placed into one of $n$ urns, let $X_i$ be an indicator that urn $i$ contains at least $c$ balls. These $X_i$ are negatively related (see Section 4 of \cite{pp02}) and their sum counts the number of urns which contain at least $c$ balls.
\item Given an urn initially containing $N$ balls of $n$ different colours, on each of $m$ draws a ball is selected uniformly at random and returned to the urn along with an additional ball of the same colour. Letting $X_i$ be an indicator that no ball of colour $i$ was drawn during this procedure, these $X_i$ are known to be negatively related: see Section 6.3 of \cite{bhj92}. Their sum represents the number of different colours that were never drawn.
\item An $r\times n$ matrix is constructed by placing $s_k$ 1s in row $k$, with their positions chosen by sampling without replacement. For a given $m$, let $X_i$ be the indicator that column $i$ contains at most $m$ non-zero entries. These $X_i$ are negatively related (see Section 6.4 of \cite{bhj92}) and their sum is the count of such columns. 
\item Distribute $n$ points uniformly at random on the unit circle, and let $X_i$ be the indicator that the spacing between the $i$th pair of adjacent points is smaller than some given threshold. These $X_i$ are negatively related (see Section 7.1 of \cite{bhj92}) and their sum counts the number of small spacings on the circle.
\end{itemize}

In what follows we write that $Z\sim\text{Bin}(n,p)$ has a binomial distribution if $\mathbb{P}(Z=k)=\binom{n}{k}p^k(1-p)^{n-k}$ for $k=0,\ldots,n$ and some $0\leq p\leq1$. Our main result is the following:
\begin{theorem}\label{thm:NegRel}
Let $X_1,\ldots,X_n$ be negatively related indicator random variables, and $W=X_1+\cdots+X_n$ with mean $\mathbb{E}W=\mu$. Then $H(W)\leq H(Z)$, where $Z\sim\text{Bin}(n,\mu/n)$.
\end{theorem}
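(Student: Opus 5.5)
The plan has two parts: derive a stochastic ordering condition from negative relatedness, then show that this condition alone forces $H(W)\le H(Z)$ via Yu's Markov chain construction.

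\textbf{Step 1: the ordering condition.} Let $W^{s}$ denote the size-biased version of $W$, so that $\mathbb{P}(W^s=k)=kf_k/\mu$. For sums of indicators there is the standard coupling: choose an index $I$ with $\mathbb{P}(I=i)=\mathbb{E}X_i/\mu$, and set $W^s=1+\sum_{j\neq I}X_j^{(I)}$, where $(X_j^{(i)})_{j\neq i}$ has the law of $(X_j)_{j\neq i}$ conditioned on $X_i=1$. Applying \eqref{eq:NegRel} with $g$ an increasing function of $\sum_{j\ne i}X_j$ gives
\[
\sum_{j\neq i}X_j^{(i)}\le_{st}\sum_{j\ne i}X_j=W-X_i .
\]
Mixing over $I$ should then give $W^s-1\le_{st}W-X_I$.

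The cleanest usable form is a convex-order comparison with the binomial. For $Z$ the analogous identity is $Z^s-1\stackrel{d}{=}\mathrm{Bin}(n-1,p)$. I would aim for a condition of the form
\[
\mathbb{E}\,\phi(W^s-1)\le \mathbb{E}\,\phi(W-X_I)\quad\text{for increasing }\phi .
\]
Equivalently, this is an inequality between $\mathbb{E}[Wg(W-1)]$ and $\mathbb{E}[(n-W)g(W)]\cdot\mu/(n-\mu)$ for increasing $g$, mirroring the ULC$(n)$ characterisation. That ULC random variables also satisfy this condition is what yields Remark~\ref{rem:ULC}.

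\textbf{Step 2: entropy comparison via Yu's chain.} Following Yu (2008), I would define a continuous-time interpolation $W_t$, $t\in[0,1]$, with $W_0=W$ and $W_1\stackrel{d}{=}Z$. Concretely, take a thinning/binomial-mixing Markov chain on $\{0,\dots,n\}$ that preserves the mean $\mu$ and whose stationary law is $\mathrm{Bin}(n,\mu/n)$. I would then show the relative-entropy type functional
\[
t\mapsto -\sum_k \mathbb{P}(W_t=k)\log \mathbb{P}(Z=k)
\]
is constant in $t$; this holds because $\log \mathbb{P}(Z=k)$ is affine plus $\log\binom nk$, and the chain preserves suitable moments. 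It follows that
\[
H(Z)-H(W)\ge D(W\|Z)-D(Z\|Z)\ge0 .
\]
Yu's trick is exactly that $H(W)\le -\sum_k f_k\log\mathbb{P}(Z=k)$. So the real target reduces to
\[
\mathbb{E}\log\binom nW\le\mathbb{E}\log\binom nZ ,
\]
using that the linear parts agree because the means agree. This is what I would prove directly.

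\textbf{Step 3: the key inequality.} The function $k\mapsto\log\binom nk$ is concave. So the reduced target follows if $W\le_{cx}Z$, or more weakly if $\mathbb{E}\psi(W)\ge \mathbb{E}\psi(Z)$ for the specific convex $\psi=-\log\binom n\cdot$. I would establish $W\le_{cx}Z$ along Yu's chain: show that the ordering condition of Step 1 is preserved along $W_t$, and that it implies $\frac{d}{dt}\mathbb{E}\psi(W_t)\le0$ for convex $\psi$. The derivative is computed from the generator and, after summation by parts, becomes
\[
\mathbb{E}[W\Delta^2\psi]-\tfrac{\mu}{n-\mu}\mathbb{E}[(n-W)\Delta^2\psi]
\]
with $\Delta^2\psi$ increasing. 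The Step 1 inequality controls exactly such terms.

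\textbf{Main obstacle.} I expect the hardest part to be matching the negative-relation ordering precisely to the generator inequality needed in Step 3. In particular, the size-bias ordering must handle the non-exchangeable weights $\mathbb{E}X_i$ rather than only the symmetric ULC case. My first attempt would be to exploit that $\sum_j\mathbb{E}X_i(1-\cdots)$ telescopes, and that $\mathrm{Bin}(n,\mu/n)$ dominates in convex order any sum of independent Bernoullis with mean $\mu$ (Hoeffding). Together these should reduce the claim to the independent case.
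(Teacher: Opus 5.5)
Your architecture matches the paper's: a size-bias comparison, propagated along Yu's chain to get $W\le_{\text{cx}}Z$, then log-concavity of the binomial and Gibbs' inequality. However, the link between your Step~1 and the rest is missing.

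\textbf{The false equivalence in Step~1.} Negative relation gives you $W^\star-1\le_{\text{st}}W-X_I$, with $\mathbb{P}(I=i)=p_i/\mu$ and $p_i=\mathbb{E}X_i$ (your $W^s$ is the paper's $W^\star$). This is \emph{not} equivalent to $\mathbb{E}[Wg(W-1)]/\mu\le\mathbb{E}[(n-W)g(W)]/(n-\mu)$, i.e.\ to $W^\star\le_{\text{st}}W^+$. Indeed $\mathbb{E}\phi(W-X_I)=\mu^{-1}\sum_ip_i\,\mathbb{E}\phi(W-X_i)$, whereas by Lemma~\ref{lem:Coupling}
\[
\mathbb{E}\phi(W^+-1)=\frac{1}{n-\mu}\sum_{j=1}^n(1-p_j)\,\mathbb{E}[\phi(W-X_j)\mid X_j=0]\,.
\]
The condition $W^\star\le_{\text{st}}W^+$ depends only on the law of $W$, so it can be fed into \eqref{eq:TransProb} and propagated along the chain. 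Your Step~1 condition involves the joint law of $(X_i,W)$ and has no meaning for $W_t$, $t\ge1$.

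\textbf{What is needed, and why your fix does not supply it.} Along the paper's route you need two further steps:
(a) $W-X_j\le_{\text{st}}(W-X_j\mid X_j=0)$, which is an easy second use of negative relation;
(b) a change of mixing weights from $p_i/\mu$ to $(1-p_j)/(n-\mu)$.
You rightly flag (b) as the crux, but your proposed fix does not deliver it. Hoeffding's theorem concerns \emph{independent} Bernoulli sums and gives a convex-order comparison, not the stochastic ordering of $W^\star$ and $W^+$. You also give no mechanism for passing from negatively related to independent indicators. A comparison $W\le_{\text{cx}}\sum_i\tilde X_i$, with independent $\tilde X_i\stackrel{d}{=}X_i$, would combine with Hoeffding to give $W\le_{\text{cx}}Z$ directly and bypass the chain. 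But the known result of this kind (Shao, 2000) assumes negative association, not negative relation.

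\textbf{Preservation along the chain.} You assert without any mechanism that the ordering is preserved along $W_t$. In the paper this is the content of Lemmas~\ref{lem:Pres1} and~\ref{lem:Pres2}, which use explicit mixture and mixed-hypergeometric representations of $(\cdot)^\star$ and $(\cdot)^+$.

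\textbf{The paper's own handling of (b).} Step (b) is also where the paper's argument is thin. It passes from $\mathbb{E}g(W-X_I)$ to $\mathbb{E}g(W-X_J)$ using only $X_J\le_{\text{st}}X_I$ (Lemma~\ref{lem:OrdIJ}). Because $W$ and $X_i$ are dependent, this does not give $W-X_I\le_{\text{st}}W-X_J$.

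For a counterexample, let $X_1$ be an indicator with mean $4/5$, independent of an indicator $B$ with mean $1/2$, and set $(X_2,X_3)=(B,1-B)$. These are negatively associated, hence negatively related, with $n=3$, $\mu=9/5$, $W=X_1+1$, $\mathbb{P}(I=1)=4/9$ and $\mathbb{P}(J=1)=1/6$. Since $W-X_1\equiv1$ and $\mathbb{P}(W-X_2\ge1)=\mathbb{P}(W-X_3\ge1)=9/10$,
\[
\mathbb{P}(W-X_I\ge1)=\tfrac49+\tfrac59\cdot\tfrac{9}{10}=\tfrac{17}{18}>\tfrac{11}{12}=\tfrac16+\tfrac56\cdot\tfrac{9}{10}=\mathbb{P}(W-X_J\ge1)\,.
\]
So the middle inequality of \eqref{eq:NegDepAppl} fails for $g=\mathbf{1}\{\cdot\ge1\}$. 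This does not refute the theorem: here $W^\star\le_{\text{st}}W^+$ still holds, since $W^\star\in\{1,2\}$ and $W^+\in\{2,3\}$. But the reweighting needs a real argument beyond Cauchy--Schwarz. Only for identically distributed $X_i$, where one may take $I=J$, do your Step~1 and (a) give $W^\star\le_{\text{st}}W^+$ immediately.

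\textbf{Sign slips in Steps~2--3.}
The map $t\mapsto-\sum_k\mathbb{P}(W_t=k)\log\mathbb{P}(Z=k)$ is not constant; it equals $-\mathbb{E}\log\binom{n}{W_t}$ plus a function of $\mu$.
Gibbs' inequality gives $H(W)\le H(Z)+\mathbb{E}\log\binom{n}{Z}-\mathbb{E}\log\binom{n}{W}$. So the target is $\mathbb{E}\log\binom{n}{W}\ge\mathbb{E}\log\binom{n}{Z}$, not $\le$.
$\mathbb{E}\psi(W_t)$ \emph{increases} along the chain for convex $\psi$. Summation by parts in \eqref{eq:TransProb} gives the increment $\frac{np(1-p)}{n+1}\bigl(\mathbb{E}\Delta\psi(W_t^+-1)-\mathbb{E}\Delta\psi(W_t^\star-1)\bigr)$, in which the relevant increasing function is $\Delta\psi$, not $\Delta^2\psi$.
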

This will follow from the more general Theorem \ref{thm:StochOrder} below. The proofs of both of these results are given in Section \ref{sec:Proofs}, following which we give some concluding remarks and extensions in Section \ref{sec:Conc}. The proof of Theorem \ref{thm:StochOrder} is based on the hypergeometric thinning construction of Yu \cite{y08} combined with a stochastic ordering argument in the spirit of \cite{d16}; we then use an explicit construction of the relevant random variables to deduce Theorem \ref{thm:NegRel}. 

In order to state Theorem \ref{thm:StochOrder} we require some additional notation. Given a random variable $W$ supported on $\{0,1,\ldots,n\}$ and with mean $\mu$, we define the random variables $W^\star$ and $W^+$ as follows:
\begin{equation}\label{eq:Transforms}
\mathbb{P}(W^\star=k)=\frac{k\mathbb{P}(W=k)}{\mu}\text{ and }
\mathbb{P}(W^+=k)=\frac{(n+1-k)\mathbb{P}(W+1=k)}{n-\mu}\,,
\end{equation}
each for $k=1,\ldots,n$. Note that we could equivalently define $W^+$ by writing $W^+=n+1-(n-W)^\star$. The random variable $W^\star$ is the size-biased version of $W$, which has previously been used to establish maximum entropy results for the Poisson distribution via a stochastic comparison with $W+1$ \cite{d16,j07}. 

It is easily checked that if $W$ is binomial, the distributions of $W^\star$ and $W^+$ coincide. Here we will use a stochastic comparison between $W^\star$ and $W^+$ to establish our maximum entropy results for the binomial distribution. A random variable $X$ is stochastically smaller than a random variable $Y$, written $X\leq_\text{st}Y$, if $\mathbb{E}g(X)\leq\mathbb{E}g(Y)$ for all increasing functions $g$.

\begin{theorem}\label{thm:StochOrder}
Let $W$ be a random variable supported on $\{0,1,\ldots,n\}$, with mean $\mathbb{E}W=\mu$ and such that $W^\star\leq_\text{st}W^+$. Then $H(W)\leq H(Z)$, where $Z\sim\text{Bin}(n,\mu/n)$. 
\end{theorem}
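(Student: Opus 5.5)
The plan is to compare $W$ and $Z$ in the convex order and then apply a Gibbs-type inequality. Write $b_k=\binom{n}{k}p^k(1-p)^{n-k}$ with $p=\mu/n$, the mass function of $Z$. By nonnegativity of relative entropy, $H(W)\leq-\mathbb{E}\log b_W$. Since $-\log b_k=-\log\binom{n}{k}-k\log p-(n-k)\log(1-p)$ and $\mathbb{E}W=\mathbb{E}Z$, the linear terms contribute the same to $-\mathbb{E}\log b_W$ and to $H(Z)=-\mathbb{E}\log b_Z$. It therefore suffices to show
\[
\mathbb{E}\log\binom{n}{W}\geq\mathbb{E}\log\binom{n}{Z}.
\]
The map $k\mapsto\log\binom{n}{k}$ is concave on $\{0,\ldots,n\}$, so this follows once we prove $W\leq_{\text{cx}}Z$, i.e.\ $\mathbb{E}g(W)\leq\mathbb{E}g(Z)$ for every convex $g$ on $\{0,\ldots,n\}$.

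To prove the convex ordering I would use a Markov chain interpolation in the spirit of Yu \cite{y08}. Take the birth--death chain on $\{0,\ldots,n\}$ with up-rate $(n-k)p$ and down-rate $k(1-p)$ from state $k$. Its stationary law is $\text{Bin}(n,p)$, and its semigroup has the explicit thinning form
\[
P_tg(k)=\mathbb{E}\,g\bigl(\text{Bin}(k,a_t)+\text{Bin}(n-k,b_t)\bigr),
\]
with independent summands, $a_t=p+(1-p)e^{-t}$ and $b_t=p(1-e^{-t})$. Since $a_t\geq b_t$, raising $k$ by one amounts to swapping one independent $\text{Bin}(1,b_t)$ summand for an independent $\text{Bin}(1,a_t)$ summand. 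A coupling argument on these two summands should then show that $P_tg$ is convex whenever $g$ is. I expect this convexity-preservation step to be the main technical obstacle; if the coupling proves awkward, I would instead verify directly that the second differences of $P_tg$ are nonnegative mixtures of second differences of $g$.

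With convexity in hand, set $\phi(t)=\mathbb{E}P_tg(W)$, so that $\phi(0)=\mathbb{E}g(W)$ and $\phi(\infty)=\mathbb{E}g(Z)$. Then $\phi'(t)=\mathbb{E}[LP_tg(W)]$, where $L$ is the generator. For any function $u$, the definitions \eqref{eq:Transforms} give
\[
\mathbb{E}[(n-W)(u(W+1)-u(W))]=(n-\mu)\mathbb{E}\Delta u(W^+-1),
\]
\[
\mathbb{E}[W(u(W-1)-u(W))]=-\mu\,\mathbb{E}\Delta u(W^\star-1),
\]
where $\Delta u(j)=u(j+1)-u(j)$. Because $p(n-\mu)=(1-p)\mu$, this yields
\[
\mathbb{E}Lu(W)=p(n-\mu)\bigl(\mathbb{E}\Delta u(W^+-1)-\mathbb{E}\Delta u(W^\star-1)\bigr).
\]
Apply this with $u=P_tg$. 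Convexity of $u$ makes $\Delta u(\cdot-1)$ increasing, so the hypothesis $W^\star\leq_{\text{st}}W^+$ gives $\phi'(t)\geq0$. The point of this order of operations is that the hypothesis is only ever used for $W$ itself, never for the evolved variables $W_t$. Integrating over $t\in[0,\infty)$ gives $\mathbb{E}g(W)\leq\mathbb{E}g(Z)$, which completes the proof.
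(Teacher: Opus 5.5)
Your argument is correct, and it takes a genuinely different route from the paper's.

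\textbf{How the two routes differ.} The paper runs Yu's discrete hypergeometric-thinning chain $W_{t+1}=\mathcal{H}_{n+1}(W_t+\eta_t)$. Through Lemmas \ref{lem:Pres1} and \ref{lem:Pres2} it shows that the hypothesis $W_t^\star\leq_\text{st}W_t^+$ propagates forward along the chain. This is needed because the increment identity \eqref{eq:TransProb} involves $W_t^\star$ and $W_t^+$ at every step. You instead dualise. Writing $\mathbb{E}g(W_t)=\mathbb{E}P_tg(W)$, you move the evolution onto the test function. The only structural input you then need is that the binomial-thinning semigroup preserves convexity, and the ordering hypothesis is used only for $W$ itself.

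\textbf{The step you flagged.} Convexity preservation, which you called the main obstacle, goes through via your fallback. Take $R\sim\text{Bin}(k-1,a_t)+\text{Bin}(n-k-1,b_t)$ with independent summands, so $R\leq n-2$. Expand $h(j)=g(r+j)$ for $j\in\{0,1,2\}$ to second order; the first-order terms cancel, giving
\[
P_tg(k+1)-2P_tg(k)+P_tg(k-1)=(a_t-b_t)^2\,\mathbb{E}\,\Delta^2g(R)=e^{-2t}\,\mathbb{E}\,\Delta^2g(R)\geq0 .
\]

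\textbf{The remaining steps.} Your generator identity is correct, including the cancellation from $p(n-\mu)=(1-p)\mu$. The limit $\phi(\infty)=\mathbb{E}g(Z)$ follows from $a_t,b_t\to p$. Your entropy step is the same as \eqref{eq:LogConcave}.

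\textbf{What each route buys.} Yours is shorter and avoids the size-bias and mixed-hypergeometric computations entirely. It also isolates a general principle: a Stein-type generator identity combined with a convexity-preserving semigroup. Monotonicity of $\phi$ further gives $W_s\leq_\text{cx}W_u$ for $s\leq u$ along your chain, which is the analogue of Lemma \ref{lem:Convex}. The paper's route gives a by-product: the class $\{W:W^\star\leq_\text{st}W^+\}$ is invariant under Yu's chain. This directly parallels Yu's preservation of ultra-log-concavity and is of independent interest, though it is not needed for the entropy bound.
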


\begin{remark}\label{rem:ULC}
Theorem \ref{thm:StochOrder} generalises the main result of Yu \cite{y08}, who showed that the binomial distribution maximises entropy within the class of random variables which are ultra-log-concave of order $n$. A random variable $W$ supported on $\{0,1,\ldots,n\}$ is said to be ultra-log-concave of order $n$ if
\[
\frac{\mathbb{P}(W=k+1)^2}{\binom{n}{k+1}^2}\geq\frac{\mathbb{P}(W=k)\mathbb{P}(W=k+2)}{\binom{n}{k}\binom{n}{k+2}}\,,
\]
for $k=0,\ldots,n-2$. Using the definitions \eqref{eq:Transforms}, we see that this is equivalent to
\[
\frac{\mathbb{P}(W^+=k+2)}{\mathbb{P}(W^\star=k+2)}\geq\frac{\mathbb{P}(W^+=k+1)}{\mathbb{P}(W^\star=k+1)}\,,
\]
that is, that $W^\star$ is smaller than $W^+$ in the likelihood ratio order. Recall that a random variable $X$ is smaller than $Y$ in the likelihood ratio order if $\frac{\mathbb{P}(Y=k)}{\mathbb{P}(X=k)}$ is increasing over the union of the supports of $X$ and $Y$. This likelihood ratio ordering between $W^\star$ and $W^+$ assumed by \cite{y08} is stronger than the corresponding stochastic order we assume in our Theorem \ref{thm:StochOrder}: see Theorem 1.C.1 of \cite{ss07}.
Note also that the main result of \cite{y08} is itself a generalisation of the maximum entropy result in Theorem 7 of \cite{h01}, which showed that the binomial distribution maximises entropy among sums of independent indicator random variables with fixed support and mean.

Furthermore, Theorem \ref{thm:NegRel} provides a more natural and sharper comparison than Corollary 2.3 of \cite{d16}, which shows that the entropy of a sum of negatively related indicator random variables is bounded by that of a Poisson distribution with matching mean. That a binomial random variable can itself be written as a sum of negatively related indicators makes our Theorem \ref{thm:NegRel} a more natural comparison, as well as a sharp upper bound.
\hfill $\square$
\end{remark}

\section{Proofs of main results}\label{sec:Proofs}

We will first prove Theorem \ref{thm:StochOrder}, before using that result to deduce Theorem \ref{thm:NegRel}.

\subsection{Proof of Theorem \ref{thm:StochOrder}}

We begin by recalling a Markov chain construction of Yu \cite{y08}, which we will also employ in our proof. Yu shows that when its initial distribution is ultra-log-concave of order $n$, this property is preserved by this Markov chain. We will show that an analogous property holds if the initial distribution satisfies the weaker stochastic ordering condition of Theorem \ref{thm:StochOrder}, and use this to deduce a comparison of the entropy of this initial distribution with the binomial limit of the Markov chain. With $W_0=W$ as in Theorem \ref{thm:StochOrder} and $p=\mu/n$, we define $\{W_t:t=0,1,\ldots\}$ by letting 
\[
W_{t+1}=\mathcal{H}_{n+1}(W_t+\eta_t)
\]
for $t=0,1,\ldots$, where $\eta_0,\eta_1,\ldots$ is a sequence of independent indicator random variables, each with mean $p$ and independent of all else, and $\mathcal{H}_n$ is a hypergeometric thinning operator defined for a random variable $X$ supported on $\{0,1,\ldots,n\}$ by
\begin{equation}\label{eq:Hdef}
\mathbb{P}(\mathcal{H}_nX=k)=\frac{n-k}{n}\mathbb{P}(X=k)+\frac{k+1}{n}\mathbb{P}(X=k+1)
\end{equation}
for $k=0,1,\ldots,n-1$. That is, 
\begin{equation}\label{eq:CondHyp}
\mathcal{H}_nX|X\sim\text{Hyp}(n-1,n,X)\,,
\end{equation}
where we write that $T\sim\text{Hyp}(m,N,r)$ to denote that $T$ has a hypergeometric distribution with mass function $\mathbb{P}(T=k)=\frac{\binom{r}{k}\binom{N-r}{m-k}}{\binom{N}{m}}$ and mean $\mathbb{E}T=\frac{mr}{N}$.

Let $W_0$ have support $\{0,1,\ldots,n\}$ and mean $\mu$, as usual. Then the same is true of each of the subsequent $W_t$. Further, if $W_0$ is binomial then so are each of the $W_t$ and, regardless of the initial distribution, the $W_t$ converge in distribution as $t\to\infty$ to a binomial random variable with this mean and support. See \cite{y08} for a discussion of these and further properties of this Markov chain.

We now show that if $W_0^\star\leq_\text{st}W_0^+$, then this ordering continues to hold for subsequent steps of this Markov chain. This is done by combining the following two lemmas. These play a role analogous to that of Lemma 1 of \cite{y08}, which shows that ultra-log-concavity is preserved under these same operations.
\begin{lemma}\label{lem:Pres1}
Let $X$ be a random variable supported on $\{0,1,\ldots,n\}$ and with mean $\mu$. If $X^\star\leq_\text{st}X^+$ then $(X+\eta)^\star\leq_\text{st}(X+\eta)^+$, where $\eta$ is an indicator random variable, independent of $X$, with $\mathbb{E}\eta=p=\mu/n$.
\end{lemma}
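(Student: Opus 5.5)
The plan is to write both $(X+\eta)^\star$ and $(X+\eta)^+$ as two-component mixtures with the \emph{same} mixing weights. The hypothesis $X^\star\leq_\text{st}X^+$ then passes component by component to $(X+\eta)^\star\leq_\text{st}(X+\eta)^+$. Throughout, write $Y=X+\eta$. It is supported on $\{0,1,\ldots,n+1\}$ with mean $\mu+p$, so $Y^+$ is defined by \eqref{eq:Transforms} with $n+1$ in place of $n$. Equivalently, $Y^+=n+2-(n+1-Y)^\star$.

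\emph{Step 1: the size-biased version.} I will use the standard fact that the size-biased version of a sum of independent nonnegative variables is a mixture. With probability proportional to the mean of a summand, that summand is replaced by its size-biased version and the others are left unchanged. This can be checked directly from \eqref{eq:Transforms} by splitting $k\mathbb{P}(Y=k)=\mathbb{E}[X\mathbf{1}\{Y=k\}]+\mathbb{E}[\eta\mathbf{1}\{Y=k\}]$. Since $\eta^\star\equiv1$, $Y^\star$ is the mixture of $X^\star+\eta$, with weight $\mu/(\mu+p)$, and $X+1$, with weight $p/(\mu+p)$. Here $X^\star$ is taken independent of $\eta$.

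\emph{Step 2: the transform $Y^+$.} Apply the same decomposition to $n+1-Y=(n-X)+(1-\eta)$, which has mean $n+1-\mu-p$. Then $(n+1-Y)^\star$ is the mixture of $(n-X)^\star+(1-\eta)$, with weight $(n-\mu)/(n+1-\mu-p)$, and $(n-X)+1$, with weight $(1-p)/(n+1-\mu-p)$. Substituting into $Y^+=n+2-(n+1-Y)^\star$ and using $X^+=n+1-(n-X)^\star$, the two components become $X^++\eta$ and $X+1$.

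\emph{Step 3: matching weights.} The key step, and the only place where $p=\mu/n$ is used, is that the two mixtures have identical weights. Indeed,
\[
\frac{\mu}{\mu+\mu/n}=\frac{n}{n+1}
\]
and
\[
\frac{n-\mu}{n+1-\mu-\mu/n}=\frac{n-\mu}{(n-\mu)(1+1/n)}=\frac{n}{n+1}.
\]
So both $Y^\star$ and $Y^+$ are mixtures, with weights $n/(n+1)$ and $1/(n+1)$, of a first component ($X^\star+\eta$, respectively $X^++\eta$) and the common component $X+1$.

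\emph{Step 4: conclusion.} Since $\eta$ is independent of $X^\star$ and of $X^+$, we have $X^\star+\eta\leq_\text{st}X^++\eta$. To see this, condition on $\eta$ and apply the hypothesis to the increasing function $x\mapsto g(x+\eta)$. For any increasing $g$,
\[
\mathbb{E}g(Y^\star)=\tfrac{n}{n+1}\mathbb{E}g(X^\star+\eta)+\tfrac{1}{n+1}\mathbb{E}g(X+1),
\]
and
\[
\mathbb{E}g(Y^+)=\tfrac{n}{n+1}\mathbb{E}g(X^++\eta)+\tfrac{1}{n+1}\mathbb{E}g(X+1).
\]
Comparing the first terms gives $\mathbb{E}g(Y^\star)\leq\mathbb{E}g(Y^+)$. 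The main obstacle is Step 2: the bookkeeping with $n+1$ versus $n$ in the definition of $Y^+$ has to be done carefully so that the second component comes out as exactly $X+1$ and the weights are as stated.
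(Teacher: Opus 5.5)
Your proof is correct and follows the paper's argument. Both write $(X+\eta)^\star$ and $(X+\eta)^+$ as mixtures with common weights $\tfrac{n}{n+1},\tfrac{1}{n+1}$ of $X^\star+\eta$ (respectively $X^++\eta$) and the shared component $X+1$, then use closure of $\leq_\text{st}$ under independent sums and mixtures. The only, cosmetic, difference is that the paper verifies the representation of $(X+\eta)^+$ by direct computation from \eqref{eq:Transforms}, whereas you obtain it from the size-bias decomposition of $(n-X)+(1-\eta)$ via the reflection $Y^+=n+2-(n+1-Y)^\star$, which also makes transparent why both weights equal $n/(n+1)$ exactly when $p=\mu/n$.
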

\begin{proof}
Let $K$ be an indicator random variable, independent of all else, with mean $\frac{n}{n+1}$. It is well known that we may construct $(X+\eta)^\star$ as $K(X^\star+\eta)+(1-K)(X+1)$, since we may size-bias a sum of independent random variables by choosing a summand with probability proportional to its mean and replacing it with its size-biased version (see Section 2.4 of \cite{agk19}), and since $\eta^\star=1$ almost surely. 

We show that we may similarly write $(X+\eta)^+$ as $K(X^++\eta)+(1-K)(X+1)$, by directly using the definition \eqref{eq:Transforms}: for $k=1,\ldots,n+1$ we have 
\begin{align*}
\mathbb{P}((X+\eta)^+=k)&=\frac{(n+2-k)\mathbb{P}(X+\eta+1=k)}{n+1-\mu-p}\\
&=\frac{n+2-k}{n+1-\mu-p}\left(p\mathbb{P}(X+2=k)+(1-p)\mathbb{P}(X+1=k)\right)\\
&=\frac{np}{n+1}\mathbb{P}(X^+=k-1)+\frac{n(1-p)}{n+1}\mathbb{P}(X^+=k)+\frac{1}{n+1}\mathbb{P}(X+1=k)\\
&=\frac{n}{n+1}\mathbb{P}(X^++\eta=k)+\frac{1}{n+1}\mathbb{P}(X+1=k)\,,
\end{align*}
where we recall that $p=\mu/n$ and use the independence of $X$ and $\eta$. The representation of $(X+\eta)^\star$ above may be alternatively derived directly by a similar argument.

To complete the proof, it remains only to note that since $X^\star\leq_\text{st}X^+$ we have $K(X^\star+\eta)+(1-K)(X+1)\leq_\text{st}K(X^++\eta)+(1-K)(X+1)$ using standard closure properties of the usual stochastic ordering: see Theorem 1.A.3 of \cite{ss07}.
\end{proof}
\begin{lemma}\label{lem:Pres2}
Let $X$ be a random variable supported on $\{0,1,\ldots,n\}$ and with mean $\mu$. If $X^\star\leq_\text{st}X^+$ then $(\mathcal{H}_nX)^\star\leq_\text{st}(\mathcal{H}_nX)^+$, where $\mathcal{H}_n$ is defined by \eqref{eq:Hdef}.
\end{lemma}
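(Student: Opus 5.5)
The plan is to write both transforms of $Y:=\mathcal{H}_nX$ as the image of the corresponding transforms of $X$ under one and the same monotone random map. The stochastic ordering then passes straight through, exactly as in the proof of Lemma \ref{lem:Pres1}. Note that $Y$ is supported on $\{0,1,\ldots,n-1\}$ with mean $\mu(n-1)/n$. Hence $Y^\star$ and $Y^+$ are defined by \eqref{eq:Transforms} with $n-1$ in place of $n$, and the transforms are supported on $\{1,\ldots,n-1\}$.

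\emph{Step 1 (size-biased part).} Write $f_k=\mathbb{P}(X=k)$. From \eqref{eq:Hdef},
\[
\mathbb{P}(Y^\star=k)=\frac{k\left[(n-k)f_k+(k+1)f_{k+1}\right]}{\mu(n-1)}\,.
\]
Substituting $kf_k=\mu\,\mathbb{P}(X^\star=k)$ and $(k+1)f_{k+1}=\mu\,\mathbb{P}(X^\star=k+1)$ gives
\[
\mathbb{P}(Y^\star=k)=\frac{n-k}{n-1}\mathbb{P}(X^\star=k)+\frac{k}{n-1}\mathbb{P}(X^\star=k+1)\,.
\]
Setting $j=k-1$ and comparing with \eqref{eq:Hdef} shows that this is precisely the statement
\[
Y^\star\stackrel{d}{=}1+\mathcal{H}_{n-1}(X^\star-1)\,.
\]
This is legitimate because $X^\star-1$ takes values in $\{0,\ldots,n-1\}$.

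\emph{Step 2 ($+$ part).} I expect to show in the same way that $Y^+\stackrel{d}{=}1+\mathcal{H}_{n-1}(X^+-1)$, which is the main bookkeeping step. The cleanest route is by reflection. We have $W^+=n+1-(n-W)^\star$ for $W$ on $\{0,\ldots,n\}$, and the analogous identity with $n-1$ for variables on $\{0,\ldots,n-1\}$. Hypergeometric thinning commutes with reflection: $(n-1)-\mathcal{H}_nX\stackrel{d}{=}\mathcal{H}_n(n-X)$, which is immediate from \eqref{eq:CondHyp} by counting unselected balls. Applying Step 1 to $n-X$ then gives
\[
Y^+=n-\bigl((n-1)-Y\bigr)^\star=n-\left[1+\mathcal{H}_{n-1}\bigl((n-X)^\star-1\bigr)\right].
\]
Reflecting once more inside $\mathcal{H}_{n-1}$ and using $(n-X)^\star=n+1-X^+$ yields the claim. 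As a fallback I would verify it directly from \eqref{eq:Transforms}, as in the calculation in Lemma \ref{lem:Pres1}. Note that $X^+\in\{1,\ldots,n\}$, since the coefficient $n+1-k$ vanishes at $k=n+1$, so $X^+-1\in\{0,\ldots,n-1\}$ as required.

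\emph{Step 3 (monotonicity).} By \eqref{eq:CondHyp}, $\mathcal{H}_{n-1}V\mid V=r\sim\text{Hyp}(n-2,n-1,r)$. This family is stochastically increasing in $r$. One way to see it is by coupling: drawing $n-2$ of $n-1$ balls means omitting one uniformly chosen ball, and recolouring one extra ball as ``marked'' can only increase the marked count. Alternatively, the hypergeometric family has monotone likelihood ratio in $r$. Consequently, if $V_1\leq_\text{st}V_2$, we may couple them with $V_1\le V_2$ almost surely and then couple the thinnings monotonically, giving $\mathcal{H}_{n-1}V_1\leq_\text{st}\mathcal{H}_{n-1}V_2$; this is the mixture closure in Theorem 1.A.3 of \cite{ss07}. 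Applying this with $V_1=X^\star-1\leq_\text{st}V_2=X^+-1$ and adding $1$ gives $Y^\star\leq_\text{st}Y^+$ by Steps 1 and 2.
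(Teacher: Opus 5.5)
Your proof is correct and follows the same route as the paper. Both arguments represent $(\mathcal{H}_nX)^\star$ and $(\mathcal{H}_nX)^+$ as $1+\text{Hyp}(n-2,n-1,\cdot-1)$ mixed over $X^\star$ and $X^+$ respectively, and then use stochastic monotonicity of the hypergeometric in its third parameter. The only differences are in how you verify the two representations: you check the size-biased one by a direct mass-function computation and get the $+$ one by reflection via $W^+=n+1-(n-W)^\star$, whereas the paper cites mixing properties of size-biasing and computes the $+$ case directly by conditioning.
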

\begin{proof}
If $T\sim\text{Hyp}(m,N,r)$ then using the definition \eqref{eq:Transforms} it is easily checked that $T^\star\sim\text{Hyp}(m-1,N-1,r-1)+1$. It follows from mixing properties of the size-bias transformation (see Section 2.2.6 of \cite{agk19}) and the mixed hypergeometric representation \eqref{eq:CondHyp} of $\mathcal{H}_nX$ that $(\mathcal{H}_nX)^\star|X^\star\sim\text{Hyp}(n-2,n-1,X^\star-1)+1$. This can also be shown directly using an argument analogous to the construction of $(\mathcal{H}_nX)^+$ we give below.

Writing $Y=\mathcal{H}_nX$ and noting that $Y$ is supported on $\{0,1,\ldots,n-1\}$, from the definition \eqref{eq:Transforms} we have that for any function $g$,
\begin{align*}
\mathbb{E}g(Y^+)&=\frac{\mathbb{E}[(n-1-Y)g(Y+1)]}{n-1-\mathbb{E}Y}\\
&=\sum_x\frac{\mathbb{E}[(n-1-Y)g(Y+1)|X=x-1]}{n-1-\mathbb{E}Y}\mathbb{P}(X=x-1)\\
&=\sum_x\frac{\mathbb{E}[(n-1-Y)g(Y+1)|X=x-1]}{\mathbb{E}[n-1-Y|X=x-1]}\frac{\mathbb{E}[n-1-Y|X=x-1]\mathbb{P}(X=x-1)}{n-1-\mathbb{E}Y}\,.
\end{align*}
From the mixed hypergeometric representation \eqref{eq:CondHyp} of $Y=\mathcal{H}_nX$ we have that
\begin{equation}\label{eq:lem5_2}
\mathbb{E}[n-1-Y|X=x-1]=n-1-\frac{(n-1)(x-1)}{n}=\frac{n-1}{n}(n+1-x)\,,
\end{equation}
and so since
\begin{equation}\label{eq:lem5_1}
\frac{\mathbb{E}[n-1-Y|X=x-1]\mathbb{P}(X=x-1)}{n-1-\mathbb{E}Y}\propto(n+1-x)\mathbb{P}(X+1=x)
\end{equation}
and is a probability mass function, we conclude that the left-hand side of \eqref{eq:lem5_1} is equal to $\mathbb{P}(X^+=x)$. We also have that
\[
\mathbb{E}[(n-1-Y)g(Y+1)|X=x-1]=\sum_{k=x-2}^{x-1}\frac{n-1-k}{n}g(k+1)\binom{x-1}{k}\binom{n+1-x}{n-1-k}\,,
\]
so that using \eqref{eq:lem5_2} it is easily shown that
\[
\frac{\mathbb{E}[(n-1-Y)g(Y+1)|X=x-1]}{\mathbb{E}[n-1-Y|X=x-1]}=\sum_{k=x-2}^{x-1}g(k+1)\frac{\binom{x-1}{k}\binom{n-x}{n-k-2}}{\binom{n-1}{n-2}}=\mathbb{E}g(H_x+1)\,,
\]
where $H_x\sim\text{Hyp}(n-2,n-1,x-1)$. Hence, we have that $(\mathcal{H}_nX)^+|X^+\sim\text{Hyp}(n-2,n-1,X^+-1)+1$. 

To complete the proof we note that $T\sim\text{Hyp}(m,N,r)$ is stochastically increasing in its third parameter. This can be easily seen by constructing $T$ by selecting $m$ positions without replacement out of $N$ positions in total, and counting the number of the first $r\leq m$ positions which were selected. Hence, if $X^\star\leq_\text{st}X^+$ then the mixed hypergeometric distributions constructed above are such that $\text{Hyp}(n-2,n-1,X^\star-1)$ is stochastically smaller than $\text{Hyp}(n-2,n-1,X^+-1)$, so that $(\mathcal{H}_nX)^\star\leq_\text{st}(\mathcal{H}_nX)^+$, as required. 
\end{proof}
Having shown that $W_t^\star\leq_\text{st}W_t^+$ for all $t$ when $W_0=W$ satisfies the conditions of Theorem \ref{thm:StochOrder}, we argue as in Theorem 4.2 of \cite{d16} to obtain a convex ordering between $W$ and $Z\sim\text{Bin}(n,\mu/n)$. We say that a random variable $X$ is smaller than $Y$ in the convex order, written $X\leq_\text{cx}Y$, if $\mathbb{E}g(X)\leq\mathbb{E}g(Y)$ for all convex functions $g$ for which these expectations exist.  

We will make use of a further representation of the transition probabilities of our Markov chain given by Lemma 4.1 of \cite{d16}, which follows in a straightforward way from their definition:
\begin{equation}\label{eq:TransProb}
\mathbb{P}(W_t=k)-\mathbb{P}(W_{t+1}=k)=\frac{np(1-p)}{n+1}\Delta\left[\mathbb{P}(W_t^+=k)-\mathbb{P}(W_t^\star=k)\right]\,,
\end{equation}
where $\Delta g(k)=g(k+1)-g(k)$ is the forward difference operator, with inverse $\Delta^{-1}g(k)=-\sum_{x=k}^\infty g(x)$.  
\begin{lemma}\label{lem:Convex}
Let $W$ and $Z$ be as in Theorem \ref{thm:StochOrder}, and $\{W_t:t=0,1,\ldots\}$ as above. Then $W_t\leq_\text{cx}W_u$ for $t\leq u$. In particular, $W\leq_\text{cx}Z$. 
\end{lemma}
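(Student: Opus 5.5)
By transitivity, it suffices to show the one-step comparison $W_t\leq_\text{cx}W_{t+1}$ for every $t$. The conclusion $W\leq_\text{cx}Z$ should then follow by letting $u\to\infty$.

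Since $W_t$ and $W_{t+1}$ have the same mean $\mu$ and are supported on $\{0,1,\ldots,n\}$, a standard characterisation of the convex order on the integers (Theorem 3.A.1 of \cite{ss07}) reduces the comparison to a statement about functions $g(k)=(k-j)_+$. Equivalently, it reduces to the claim that
\[
\sum_{k}\Delta^2 g(k-1)\,\bigl[F_{t+1}(k)-F_t(k)\bigr]\geq0
\]
for every convex $g$, where $F_t(k)$ denotes the twice-iterated tail sum of the mass function of $W_t$.

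To get at this, I will compute $\mathbb{E}g(W_{t+1})-\mathbb{E}g(W_t)$ directly from \eqref{eq:TransProb}. Multiplying \eqref{eq:TransProb} by $g(k)$, summing over $k$ and applying summation by parts once gives
\[
\mathbb{E}g(W_{t+1})-\mathbb{E}g(W_t)=\frac{np(1-p)}{n+1}\sum_k \Delta g(k-1)\bigl[\mathbb{P}(W_t^+=k)-\mathbb{P}(W_t^\star=k)\bigr].
\]
The boundary terms vanish because all mass functions are finitely supported. The right-hand side is
\[
\frac{np(1-p)}{n+1}\Bigl(\mathbb{E}\,\Delta g(W_t^+-1)-\mathbb{E}\,\Delta g(W_t^\star-1)\Bigr).
\]
If $g$ is convex then $\Delta g(\cdot-1)$ is increasing. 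Lemmas \ref{lem:Pres1} and \ref{lem:Pres2}, applied inductively starting from the hypothesis $W_0^\star\leq_\text{st}W_0^+$, give $W_t^\star\leq_\text{st}W_t^+$ for all $t$. The definition of $\leq_\text{st}$ then makes the bracket nonnegative, so $\mathbb{E}g(W_t)\leq\mathbb{E}g(W_{t+1})$ and hence $W_t\leq_\text{cx}W_{t+1}$. I must take care with the sign conventions in \eqref{eq:TransProb} and in the summation by parts, so that the increments land on $g$ in the right direction.

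For the limit, I use that $W_u\to Z$ in distribution, as recalled from \cite{y08}. All variables are supported on the finite set $\{0,\ldots,n\}$, so $\mathbb{E}g(W_u)\to\mathbb{E}g(Z)$ for every function $g$. Letting $u\to\infty$ in $\mathbb{E}g(W)\leq\mathbb{E}g(W_u)$ therefore gives $W\leq_\text{cx}Z$.

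The main obstacle is the bookkeeping in the summation-by-parts step. One must check that the index shift matches \eqref{eq:TransProb}, and that both $W_t^\star$ and $W_t^+$ live on $\{1,\ldots,n\}$, so that no boundary contributions survive.
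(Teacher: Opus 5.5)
Your proposal is correct and is essentially the paper's argument. It uses the same identity \eqref{eq:TransProb}, the same inductive use of Lemmas \ref{lem:Pres1} and \ref{lem:Pres2} to get $W_t^\star\leq_\text{st}W_t^+$, and the same limit $u\to\infty$. The only difference is cosmetic: you sum \eqref{eq:TransProb} once against an arbitrary convex $g$, whereas the paper applies $\Delta^{-1}$ twice and invokes the stop-loss characterisation (Theorem 3.A.1 of \cite{ss07}). The paper's identity is yours specialised to $g(x)=(x-k+1)_+$, so your second paragraph's reduction is not actually needed.
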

\begin{proof}
The argument follows Theorem 4.2 of \cite{d16}; we sketch it here for completeness. To establish the first statement, we show that $W_t\leq_\text{cx}W_{t+1}$ for all $t=0,1,\ldots$. Using Theorem 3.A.1 of \cite{ss07}, we do this by showing that $\sum_{x=k}^\infty\mathbb{P}(W_{t+1}\geq x)\geq\sum_{x=k}^\infty\mathbb{P}(W_{t}\geq x)$ for all $k=0,1,\ldots$. To that end, we note that Lemmas \ref{lem:Pres1} and \ref{lem:Pres2} combine to show that $W_t^\star\leq_\text{st}W_t^+$ for each $t$, or equivalently, that $\mathbb{P}(W_t^+\geq k)\geq\mathbb{P}(W_t^\star\geq k)$ for all $k$. Applying the summation operator $\Delta^{-1}$ to \eqref{eq:TransProb} twice, we have that
\[
\sum_{x=k}^\infty\left(\mathbb{P}(W_{t+1}\geq x)-\mathbb{P}(W_t\geq x)\right)=\frac{np(1-p)}{n+1}\left(\mathbb{P}(W_t^+\geq k)-\mathbb{P}(W_t^\star\geq k)\right)\geq0\,,
\]
as required. The second statement of the lemma follows from the first by taking $t=0$ and $u\to\infty$. 
\end{proof}
Now, having established that $W\leq_\text{cx}Z$, we note that the binomial distribution is log-concave, that is, satisfies $b_{k+1}^2\geq b_kb_{k+2}$ for $k=0,\ldots,n-2$, where we write $b_k=\mathbb{P}(Z=k)$. Theorem \ref{thm:StochOrder} then follows from the observation made in Lemma 1 of \cite{y09} that convex ordering implies a corresponding entropy bound when the upper-bounding random variable is log-concave. We again note the concise proof for completeness:
\begin{equation}\label{eq:LogConcave}
H(Z)=-\sum_{k=0}^nb_k\log b_k\geq-\sum_{k=0}^nf_k\log b_k\geq-\sum_{k=0}^nf_k\log f_k=H(W)\,,
\end{equation}
where the first inequality follows from the convex ordering of Lemma \ref{lem:Convex} and log-concavity of the binomial distribution, and the second from the fact that the relative entropy $\sum_{k=0}^nf_k\log(f_k/b_k)\geq0$. This completes the proof of Theorem \ref{thm:StochOrder}.

\subsection{Proof of Theorem \ref{thm:NegRel}}

We now deduce Theorem \ref{thm:NegRel} from Theorem \ref{thm:StochOrder}, beginning with an explicit construction of $W^\star$ and $W^+$ in the case where $W$ is a sum of (dependent) indicator random variables.
\begin{lemma}\label{lem:Coupling}
Let $X_1,\ldots,X_n$ be indicator random variables with $\mathbb{E}X_i=p_i$ for each $i$, and $W=X_1+\cdots+X_n$ with mean $\mathbb{E}W=\mu=p_1+\cdots+p_n$. Choose the pair $(I,J)$ independent of all else with 
\[
\mathbb{P}(I=i)=\frac{p_i}{\mu}\text{ and }\mathbb{P}(J=j)=\frac{1-p_j}{n-\mu}\,,
\]
for $i,j=1,\ldots,n$. Then we may write
\[
W^\star=1+\sum_{i\not=I}X_i^{(I,1)}\text{ and }W^+=1+\sum_{j\not=J}X_j^{(J,0)}\,,
\]
where $X_i^{(I,1)}$ is equal in distribution to $X_i|(X_I=1)$ and $X_j^{(J,0)}$ is equal in distribution to $X_j|(X_J=0)$.
\end{lemma}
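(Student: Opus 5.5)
The plan is to verify both representations directly by computing $\mathbb{E}g(\cdot)$ for an arbitrary function $g$ and matching it against the definitions in \eqref{eq:Transforms}. Here the vectors $(X_i^{(I,1)})_{i\neq I}$ are understood jointly: conditional on $I=i$, the vector $(X_k^{(i,1)})_{k\neq i}$ has the joint law of $(X_k)_{k\neq i}$ given $X_i=1$. The analogous convention applies to $(X_k^{(j,0)})_{k\neq j}$ given $X_j=0$.

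\textbf{Size-biased case.} Fix $g$ and write
\[
\mu\,\mathbb{E}g(W^\star)=\mathbb{E}[Wg(W)]=\sum_{i=1}^n\mathbb{E}[X_ig(W)]=\sum_{i=1}^np_i\,\mathbb{E}[g(W)\mid X_i=1].
\]
On the event $\{X_i=1\}$ we have $W=1+\sum_{k\neq i}X_k$. Hence $\mathbb{E}[g(W)\mid X_i=1]=\mathbb{E}g\bigl(1+\sum_{k\neq i}X_k^{(i,1)}\bigr)$. Dividing by $\mu$ and recognising $p_i/\mu=\mathbb{P}(I=i)$ gives
\[
\mathbb{E}g(W^\star)=\mathbb{E}g\Bigl(1+\sum_{k\neq I}X_k^{(I,1)}\Bigr),
\]
which is the first claim. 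This is the standard size-bias coupling for sums of indicators, cf.\ Section 2.4 of \cite{agk19}. Any $i$ with $p_i=0$ carries zero weight and can be ignored.

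\textbf{The case of $W^+$.} Here I would use the identity $W^+=n+1-(n-W)^\star$ noted after \eqref{eq:Transforms}, or compute directly. From the definition,
\[
(n-\mu)\,\mathbb{E}g(W^+)=\mathbb{E}[(n-W)g(W+1)]=\sum_{j=1}^n\mathbb{E}[(1-X_j)g(W+1)]=\sum_{j=1}^n(1-p_j)\,\mathbb{E}[g(W+1)\mid X_j=0].
\]
On $\{X_j=0\}$ we have $W+1=1+\sum_{k\neq j}X_k$, so the conditional expectation equals $\mathbb{E}g\bigl(1+\sum_{k\neq j}X_k^{(j,0)}\bigr)$. Dividing by $n-\mu$ and using $\mathbb{P}(J=j)=(1-p_j)/(n-\mu)$ yields the second representation.

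\textbf{Remaining checks.} The algebra is routine, and the only points needing care are bookkeeping. First, the first line of each display must be checked against \eqref{eq:Transforms}, i.e.\ that $\mathbb{E}g(W^+)=\mathbb{E}[(n-W)g(W+1)]/(n-\mu)$. This holds because $\mathbb{P}(W+1=k)=\mathbb{P}(W=k-1)$ and $n+1-k=n-(k-1)$. Second, the conditional laws must be well defined; they are whenever $p_i>0$ and $p_j<1$, and indices with zero selection probability drop out. Finally, $I$ and $J$ need only be chosen with the stated marginals: the pair's joint law is irrelevant, since each representation uses only one of the two indices.
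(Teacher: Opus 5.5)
Your proposal is correct and follows essentially the same route as the paper. The paper verifies the $W^+$ representation by exactly this computation, writing $\mathbb{E}[(n-W)g(W+1)]=\sum_j\mathbb{E}[(1-X_j)g(W+1)]$ and conditioning on $X_j=0$, and cites the standard size-bias coupling for $W^\star$, which you instead check by the analogous direct calculation; your remarks on the joint conditional laws and on indices with $p_i=0$ or $p_j=1$ are sound bookkeeping that the paper leaves implicit.
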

\begin{proof}
The construction of the size-biased version $W^\star$ is well known: see, for example, Section 2.4 of \cite{agk19}. It may also be derived by making appropriate changes to the argument we use for $W^+$ here. 

Taking $W^+$ as in the statement of the lemma, we have that, for any $g:\{1,\ldots,n\}\to\mathbb{R}$,
\begin{align*}
\mathbb{E}g(W^+)&=\sum_{j=1}^n\mathbb{P}(J=j)\mathbb{E}[g(W+1-X_j)|X_j=0]\\
&=\frac{1}{n-\mu}\sum_{j=1}^n(1-p_j)\sum_{k=1}^{n}g(k)\mathbb{P}(W+1-X_j=k|X_j=0)\\
&=\frac{1}{n-\mu}\sum_{j=1}^n\mathbb{E}[(1-X_j)g(W+1)]\\
&=\frac{\mathbb{E}[(n-W)g(W+1)]}{n-\mu}=\sum_{k=1}^ng(k)\frac{(n+1-k)\mathbb{P}(W+1=k)}{n-\mu}\,,
\end{align*}
as required to match the definition in \eqref{eq:Transforms}.
\end{proof}
We remark that in Lemma \ref{lem:Coupling} we specify the marginal distributions of $I$ and $J$ only, and they may be constructed with whatever joint distribution (consistent with these marginals) is most convenient. For example, in the case where the $X_i$ are identically distributed (i.e., the $p_i$ are all equal) both $I$ and $J$ are uniformly distributed over $\{1,\ldots,n\}$ and we may choose $I=J$ almost surely.

To establish Theorem \ref{thm:NegRel} it remains only to show that for $W$ as in the statement of that result, $W^\star\leq_\text{st}W^+$, and then apply Theorem \ref{thm:StochOrder}. Using the coupling from Lemma \ref{lem:Coupling} to construct $W^\star=1+(W-X_I|X_I=1)$ and $W^+=1+(W-X_J|X_J=0)$, we need to show that for all increasing functions $g:\{0,1,\ldots,n-1\}\to\mathbb{R}$ we have
\[
\mathbb{E}[g(W-X_I)|X_I=1]\leq\mathbb{E}[g(W-X_J)|X_J=0]\,,
\]
where $I$ and $J$ are as in Lemma \ref{lem:Coupling}, and $X_1,\ldots,X_n$ are negatively related indicator random variables. We will need the following observation.
\begin{lemma}\label{lem:OrdIJ}
With $I$, $J$ and $X_1,\ldots,X_n$ as in Lemma \ref{lem:Coupling}, $X_J\leq_\text{st}X_I$.
\end{lemma}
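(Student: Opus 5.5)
Since $X_I$ and $X_J$ take values in $\{0,1\}$, the claim $X_J\leq_\text{st}X_I$ is equivalent to $\mathbb{P}(X_J=1)\leq\mathbb{P}(X_I=1)$. Here $I$ and $J$ are independent of $X_1,\ldots,X_n$ with the marginals of Lemma \ref{lem:Coupling}. I would compute both probabilities by conditioning on the index and then compare them.

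Conditioning on $I$ gives
\[
\mathbb{P}(X_I=1)=\sum_{i=1}^n\frac{p_i}{\mu}p_i=\frac{1}{\mu}\sum_{i=1}^np_i^2\,.
\]
Similarly,
\[
\mathbb{P}(X_J=1)=\sum_{j=1}^n\frac{1-p_j}{n-\mu}p_j=\frac{\mu-\sum_jp_j^2}{n-\mu}\,.
\]
Write $S=\sum_ip_i^2$. The required inequality is $(\mu-S)/(n-\mu)\leq S/\mu$. Cross-multiplying (with $0<\mu<n$) turns this into $\mu^2-\mu S\leq nS-\mu S$, that is, $\mu^2\leq nS$. This is exactly the Cauchy--Schwarz inequality $(\sum_ip_i)^2\leq n\sum_ip_i^2$.

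The argument uses no dependence structure among the $X_i$ and no particular joint law of $(I,J)$; only the marginals and the independence of $(I,J)$ from the $X_i$ enter. The only point needing care is the reduction of the stochastic order between indicators to a comparison of means, which is immediate because an increasing $g$ on $\{0,1\}$ satisfies $\mathbb{E}g(X)=g(0)+(g(1)-g(0))\mathbb{P}(X=1)$ with $g(1)-g(0)\geq0$. There is no real obstacle. The degenerate cases $\mu\in\{0,n\}$ are excluded by the standing assumption $\mu\in(0,n)$, which is also needed for $I$ and $J$ to be well defined.
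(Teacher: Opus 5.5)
Your proposal is correct and follows the paper's proof essentially verbatim. You reduce to comparing $\mathbb{P}(X_I=1)=\sum_ip_i^2/\mu$ with $\mathbb{P}(X_J=1)=\sum_ip_i(1-p_i)/(n-\mu)$ by conditioning on the index, and then observe that the inequality is equivalent to the Cauchy--Schwarz bound $(\sum_ip_i)^2\leq n\sum_ip_i^2$.
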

\begin{proof}
It suffices to show that $\mathbb{P}(X_I=1)\geq\mathbb{P}(X_J=1)$. By conditioning on the choices of $I$ and $J$ we have that
\[
\mathbb{P}(X_I=1)=\frac{\sum_{i=1}^np_i^2}{\sum_{j=1}^np_j}\text{ and }\mathbb{P}(X_J=1)=\frac{\sum_{i=1}^np_i(1-p_i)}{n-\sum_{j=1}^np_j}\,,
\]
and so the desired conclusion holds if and only if $n\sum_{i=1}^np_i^2\geq(\sum_{i=1}^np_i)^2$, which holds by the Cauchy--Schwarz inequality.
\end{proof}

We conclude the proof of Theorem \ref{thm:NegRel} by noting that for an increasing function $g:\{0,1,\ldots,n-1\}\to\mathbb{R}$ we have 
\begin{align}\label{eq:NegDepAppl}
\mathbb{E}[g(W-X_I)|X_I=1]&\leq\mathbb{E}g(W-X_I)
\leq\mathbb{E}g(W-X_J)\leq\mathbb{E}[g(W-X_J)|X_J=0]\,,
\end{align}
where the first inequality uses the negative relation property \eqref{eq:NegRel}, the second uses Lemma \ref{lem:OrdIJ} and that we may therefore couple our random variables in such a way that $W-X_I\leq W-X_J$ almost surely, and the final inequality again uses negative relation by writing
\begin{align*}
\mathbb{E}g(W-X_J)&=\mathbb{P}(X_J=1)\mathbb{E}[g(W-X_J)|X_J=1]+\mathbb{P}(X_J=0)\mathbb{E}[g(W-X_J)|X_J=0]\\
&\leq\mathbb{P}(X_J=1)\mathbb{E}g(W-X_J)+\mathbb{P}(X_J=0)\mathbb{E}[g(W-X_J)|X_J=0]\,,
\end{align*}
so that $\mathbb{E}g(W-X_J)\leq\mathbb{E}[g(W-X_J)|X_J=0]$. This completes the proof of Theorem \ref{thm:NegRel}.

\section{Concluding remarks and extensions} \label{sec:Conc}

We have shown that the binomial distribution with fixed support and mean maximises entropy within the class of sums of negatively related indicator random variables with that support and mean. This improves upon the main result of \cite{y08} by expanding the class of distributions within which the binomial has maximum entropy, and gives a sharp upper bound, unlike the result of \cite{d16} which compares sums of negatively related indicators to a Poisson distribution with matching mean.

In this concluding section we remark upon some related results and generalisations.

\subsection{Sums of positively related indicators}

It is natural to consider what can be said about sums of positively related indicator random variables. That is, a random variable $W=X_1+\cdots+X_n$ where the $X_i$ satisfy \eqref{eq:NegRel} with the inequality reversed for each $i$ and all increasing functions $g$. 
In this setting, following our proof of Theorem \ref{thm:NegRel} yields a convex ordering between $W$ and a binomial random variable in the special case of identically distributed indicator random variables, as in Theorem \ref{thm:PosRel} below.

Sums of identically distributed and positively related indicator random variables arise naturally in many areas, including matching problems (see Section 4.2 of \cite{bhj92}), subgraph counts and vertex degrees in random graphs (see, for example, Sections 5.1 and 5.2 of \cite{bhj92}), and extreme value statistics (see Section 8.3 of \cite{bhj92}).
\begin{theorem}\label{thm:PosRel}
Let $X_1,\ldots,X_n$ be positively related indicator random variables satisfying $\mathbb{E}X_i=p$ for all $i=1,\ldots,n$. Let $W=X_1+\cdots+X_n$ and $Z\sim\text{Bin}(n,p)$. Then $Z\leq_\text{cx}W$.  
\end{theorem}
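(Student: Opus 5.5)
We reverse the argument used for Theorem \ref{thm:NegRel} and then run the Markov chain of Lemma \ref{lem:Convex} with the inequalities reversed. The first goal is $W^+\leq_\text{st}W^\star$. In the identically distributed case, Lemma \ref{lem:Coupling} allows $I=J$, uniform on $\{1,\ldots,n\}$. Because all $p_i=p$, the variable $X_I$ is a Bernoulli$(p)$ indicator, and the chain \eqref{eq:NegDepAppl} can be reversed using positive relation. For increasing $g$ we have $\mathbb{E}[g(W-X_I)|X_I=1]\geq\mathbb{E}g(W-X_I)$ directly from the reversed form of \eqref{eq:NegRel}, applied conditionally on $I=i$ and then averaged. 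The same decomposition as at the end of the proof of Theorem \ref{thm:NegRel} gives $\mathbb{E}g(W-X_I)\geq\mathbb{E}[g(W-X_I)|X_I=0]$. The middle step of \eqref{eq:NegDepAppl}, which needed Lemma \ref{lem:OrdIJ}, becomes an equality because $I=J$. Hence $W^+\leq_\text{st}W^\star$.

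Next we show this reversed ordering is preserved along Yu's chain $W_{t+1}=\mathcal{H}_{n+1}(W_t+\eta_t)$ with $W_0=W$. The proofs of Lemmas \ref{lem:Pres1} and \ref{lem:Pres2} never used the direction of the ordering. They express $(X+\eta)^\star$ and $(X+\eta)^+$ as the same mixture operation applied to $X^\star$ and $X^+$. They express $(\mathcal{H}_nX)^\star$ and $(\mathcal{H}_nX)^+$ as the same stochastically monotone hypergeometric kernel applied to $X^\star$ and $X^+$. So the same closure properties give $X^+\leq_\text{st}X^\star\Rightarrow(X+\eta)^+\leq_\text{st}(X+\eta)^\star$ and $(\mathcal{H}_nX)^+\leq_\text{st}(\mathcal{H}_nX)^\star$. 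By induction, $W_t^+\leq_\text{st}W_t^\star$ for every $t$.

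Then we apply \eqref{eq:TransProb}, summed twice, exactly as in the proof of Lemma \ref{lem:Convex}:
\[
\sum_{x=k}^\infty\left(\mathbb{P}(W_{t+1}\geq x)-\mathbb{P}(W_t\geq x)\right)=\frac{np(1-p)}{n+1}\left(\mathbb{P}(W_t^+\geq k)-\mathbb{P}(W_t^\star\geq k)\right)\leq0.
\]
The chain preserves the mean, so Theorem 3.A.1 of \cite{ss07} gives $W_{t+1}\leq_\text{cx}W_t$, and hence $W_u\leq_\text{cx}W_0=W$ for $u\geq t$. Letting $u\to\infty$, $W_u$ converges in distribution to $Z\sim\text{Bin}(n,p)$ on the finite support $\{0,\ldots,n\}$. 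Expectations of convex functions on this finite support are continuous under this convergence, so $Z\leq_\text{cx}W$.

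The main obstacle is the first step: making sure the conditional inequalities hold once averaged over the random index $I=J$. Positive relation is a statement for each fixed $i$. Both $I$ and the event $\{X_I=1\}$ are random, so the conditioning on $X_I$ has to be expanded carefully. With equal $p_i$, $\mathbb{P}(X_I=1|I=i)=p$ for every $i$, and the conditional law of $I$ given $X_I=1$, and given $X_I=0$, remains uniform. The inequality therefore reduces cleanly to the fixed-$i$ statements. This is exactly where identical distribution is needed.
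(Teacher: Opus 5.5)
Your proposal is correct and follows the paper's proof essentially step by step. You take $I=J$ uniform in Lemma \ref{lem:Coupling}, reverse the chain \eqref{eq:NegDepAppl} via positive relation to get $W^+\leq_\text{st}W^\star$, propagate this through Lemmas \ref{lem:Pres1} and \ref{lem:Pres2}, and then rerun Lemma \ref{lem:Convex} with the inequality reversed. Your remarks that the conditional law of $I$ given $X_I$ stays uniform, and that convex expectations pass to the limit on the finite support, supply details the paper leaves implicit.
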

\begin{proof}
With notation as in Lemma \ref{lem:Coupling}, we recall the observation in the proof of Theorem \ref{thm:NegRel} that we may choose $I=J$ almost surely in the identically distributed case. We may then write the following analogue of \eqref{eq:NegDepAppl}: For an increasing function $g:\{0,1,\ldots,n-1\}\to\mathbb{R}$ we have
\[
\mathbb{E}[g(W-X_I)|X_I=1]\geq\mathbb{E}g(W-X_I)\geq\mathbb{E}[g(W-X_I)|X_I=0]\,,
\]
where the two inequalities use the positive relation assumption analogously to the way we did in \eqref{eq:NegDepAppl} for negative relation. Hence, using the coupling from Lemma \ref{lem:Coupling} we may conclude that $W^+\leq_\text{st}W^\star$ in this case. Defining $W_t$ as in the proof of Theorem \ref{thm:StochOrder}, the arguments of Lemmas \ref{lem:Pres1} and \ref{lem:Pres2} show that this stochastic ordering is preserved by our Markov chain: $W_t^+\leq_\text{st}W_t^\star$ for all $t=0,1,\ldots$. With this stochastic ordering, the proof of Lemma \ref{lem:Convex} (reversing inequality signs where necessary) shows that $Z\leq_\text{cx}W$, as required. 
\end{proof}
While we have a convex ordering with a binomial distribution in Theorem \ref{thm:PosRel}, we cannot use the argument of \eqref{eq:LogConcave} to convert this into an entropy bound without the upper-bounding random variable (in this case, $W$) having a log-concave mass function. In general, this will not be the case for a sum of positively related indicators. Further investigation of entropy bounds for sums of positively related indicator random variables is left as a topic for future research.

\subsection{Compound distributions}

Finally, we note that results of \cite{y09} allow us to extend our main results to compound random variables:
\begin{theorem}
Let $W$ and $Z$ be as in Theorem \ref{thm:NegRel}. Let $\nu_1,\nu_2,\ldots$ and $\xi_1,\xi_2,\ldots$ be two sequences of independent and identically distributed random variables, each independent of $W$ and $Z$, such that $\nu_1\leq_\text{cx}\xi_1$. Let
\[
W^\prime=\sum_{i=1}^W\nu_i\text{ and }Z^\prime=\sum_{i=1}^Z\xi_i\,.
\]
If $Z^\prime$ has log-concave mass function, then $H(W^\prime)\leq H(Z^\prime)$.
\end{theorem}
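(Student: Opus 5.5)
The plan is to reduce the statement to a convex ordering between the compound variables, $W'\leq_\text{cx}Z'$, and then repeat the entropy argument \eqref{eq:LogConcave}. For that second step we need only that $Z'$ is log-concave, which is assumed. The first step comes from combining the convex ordering $W\leq_\text{cx}Z$ of Lemma \ref{lem:Convex} with the ordering $\nu_1\leq_\text{cx}\xi_1$ of the summands. This is essentially the route of \cite{y09}, which we would follow.

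\emph{Step 1.} Since $W$ satisfies the hypotheses of Theorem \ref{thm:NegRel}, the proof of that theorem shows $W^\star\leq_\text{st}W^+$. Lemma \ref{lem:Convex} then gives $W\leq_\text{cx}Z$; in particular $\mathbb{E}W=\mathbb{E}Z$.

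\emph{Step 2.} We show $W'\leq_\text{cx}Z'$ via the intermediate variable $V=\sum_{i=1}^{Z}\nu_i$, proving $W'\leq_\text{cx}V\leq_\text{cx}Z'$.

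For $W'\leq_\text{cx}V$, fix a convex $\phi$ and set $\psi(m)=\mathbb{E}\phi(\nu_1+\cdots+\nu_m)$. We check that $\psi$ is convex on $\{0,1,\ldots\}$, i.e.\ $\psi(m+1)-\psi(m)$ is increasing in $m$. Write $S_m=\nu_1+\cdots+\nu_m$. The function $s\mapsto\phi(s+x)-\phi(s)$ is increasing for each fixed $x$, and $S_{m+1}\geq_\text{st}S_m$ fails in general, so the standard argument conditions on the extra summands instead. Take $\nu,\nu'$ independent copies of $\nu_1$, independent of $S_m$. Then
\[
\psi(m+2)-2\psi(m+1)+\psi(m)=\mathbb{E}\bigl[\phi(S_m+\nu+\nu')-\phi(S_m+\nu)-\phi(S_m+\nu')+\phi(S_m)\bigr],
\]
and the integrand is nonnegative pointwise when $\nu,\nu'$ have the same sign, by convexity of $\phi$. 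To handle mixed signs, we would cite the known result that random sums preserve convex order in the counting variable (Shaked--Shanthikumar, Theorem 3.A.13), which covers this case. Granting convexity of $\psi$, conditioning on the counting variable gives $\mathbb{E}\phi(W')=\mathbb{E}\psi(W)\leq\mathbb{E}\psi(Z)=\mathbb{E}\phi(V)$.

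For $V\leq_\text{cx}Z'$, condition on $Z=m$. Convex order is closed under convolution of independent variables, so $\nu_1+\cdots+\nu_m\leq_\text{cx}\xi_1+\cdots+\xi_m$ for each $m$. Mixing over the common distribution of $Z$ then gives $V\leq_\text{cx}Z'$.

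\emph{Step 3.} Write $f'_k$ and $b'_k$ for the mass functions of $W'$ and $Z'$, and suppose first that these are integer valued. Since $b'$ is log-concave, $-\log b'_k$ is convex in $k$ (taking the value $+\infty$ off the support). Exactly as in \eqref{eq:LogConcave},
\[
H(Z')=-\sum_k b'_k\log b'_k\geq-\sum_k f'_k\log b'_k\geq H(W').
\]
The first inequality uses $W'\leq_\text{cx}Z'$. The second is Gibbs' inequality, i.e.\ nonnegativity of relative entropy.

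\textbf{Main obstacle.} Two points need care. The first is the convexity of $\psi$ in Step 2 when the $\nu_i$ can take both signs; here we would lean on the cited closure result, or on the treatment in \cite{y09}. The second is the support issue in Step 3: the first inequality needs the support of $W'$ to lie within that of $Z'$, and log-concavity should be taken with the usual no-internal-zeros convention. The convex ordering guarantees this support containment, since the support of $Z'$ is then a lattice interval containing that of $W'$.
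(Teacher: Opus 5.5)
Your route is the paper's. Its proof combines $W\leq_\text{cx}Z$ with Theorem 1 of \cite{y09}, and that theorem is proved by exactly your Steps 2 and 3: closure of the convex order under random sums and convolution, followed by \eqref{eq:LogConcave}.

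The one real error is how you dispose of the ``main obstacle''. Theorem 3.A.13 of \cite{ss07} is stated for nonnegative summands. No citation can cover mixed signs, because there both the convexity of $\psi$ and the ordering $W'\leq_\text{cx}V$ are false. Take $\nu_1=\pm1$ with probability $\frac12$ each and $\phi(x)=|x|$; then $\psi(0)=0$ and $\psi(1)=\psi(2)=1$. This happens even within the hypotheses of the theorem as literally stated. Let $n=2$ and $X_1=1-X_2$ with $X_2\sim$ Bernoulli$(\frac12)$; these are negatively related, $W\equiv1$ and $Z\sim\text{Bin}(2,\frac12)$. Take $\xi_i=\nu_i=\pm1$. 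Then $Z'$ is distributed as $\text{Bin}(4,\frac12)-2$, which is log-concave, but $\mathbb{E}|W'|=1>\frac34=\mathbb{E}|Z'|$, so $W'\leq_\text{cx}Z'$ fails. The entropy inequality happens to hold in this example, so it is not a counterexample to the theorem, but it shows the convex-order route breaks for signed summands.

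The fix is to state explicitly the assumption the paper carries implicitly through its appeal to \cite{y09}, which treats compound distributions on the nonnegative integers: the $\nu_i$ and $\xi_i$ are nonnegative-integer valued. Then only your same-sign case arises, and your pointwise argument already gives convexity of $\psi$. Equivalently, $S_{m+1}\geq S_m$ pointwise, and $s\mapsto\phi(s+x)-\phi(s)$ is increasing for $x\geq0$. That monotonicity holds only for $x\geq0$, not for each fixed $x$ as you wrote. With this assumption the integer-valued case of Step 3 is the only one needed, and the rest of your proof, including the support discussion, goes through as written.
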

\begin{proof}
This follows by combining the convex ordering $W\leq_\text{cx}Z$ we have shown in this setting with Theorem 1 of \cite{y09}, which was established by combining the argument of \eqref{eq:LogConcave} with closure properties of the convex order (see also Theorem 3.A.13 of \cite{ss07}).
\end{proof}

\end{document}